\newtheorem{theorem}{Theorem}[section]
\theoremstyle{plain}
\newtheorem{corollary}[theorem]{Corollary}
\newtheorem{lemma}[theorem]{Lemma}
\newtheorem{proposition}[theorem]{Proposition}
\newtheorem{remark}[theorem]{Remark}
\def \endit{\quad\vrule height 1.9ex width 1.174ex depth .2ex \par} 
\renewenvironment {proof}{{\par\bf Proof:{\enspace}}}{\endit\smallskip}
\begin{document}
\title[Hopf C*-algebras]{On convolution products and automorphisms in  Hopf C*-algebras}
\author{Dan Z. Ku\v{c}erovsk\'{y}} 
\address
{University of New Brunswick \newline%
\indent Canada\qquad E3B 5A3}%
\email{dkucerov@unb.ca}%
\thanks{We thank NSERC for financial support. \\
{\textit{mailing address:} \\
Dept. of Math.,\newline
 University of New Brunswick \newline%
Canada\qquad E3B 5A3}}
\subjclass{Primary 47L80, 16T05; Secondary 47L50, 16T20 } %
\keywords{Hopf algebras, C*-algebras}\dedicatory{}

\newcommand{\Mult}{{\mathcal M}} 
\newcommand{\Hgns}{\ensuremath{{\mathcal H}_{\text{\tiny GNS}}}} 
\renewcommand{\H}{\ensuremath{{\mathcal H}}} 
\newcommand{\C}{\mathbb C}
\newcommand{\N}{\mathbb N}
\newcommand{\Z}{\mathbb Z}
\newcommand{\R}{\mathbb R}
\newcommand{\quaternions}{\mathbb H}
\newcommand{\M}[1]{M_{#1}(\C)} 
\newcommand{\Tr}{\mbox{\rm Tr}}
\newcommand{\BH}{\ensuremath{B(\H)}} 
\newcommand{\tensor}{\otimes} 
\newcommand{\BHH}{\ensuremath{B(\H\tensor\H)}} 
\newcommand{\entrelacement}{\/\textit{entrelacement\xspace}} 
\newcommand{\isom}{\cong}
\newcommand{\Sh}{\ensuremath{\widehat{S}} } 
\newcommand{\Ah}{\ensuremath{\widehat{A}} } 
\newcommand{\Bh}{\ensuremath{\widehat{B}} } 
\newcommand{\AhA}{\ensuremath{\Ah\tensor A}}
\newcommand{\BhB}{\ensuremath{\Bh\tensor B}}
\newcommand{\uh}{\ensuremath{\widehat{u}}} \newcommand{\Uh}{\ensuremath{\widehat{U}}} 
\newcommand{\fancyK}{\ensuremath{\widetilde{K}}} 
\renewcommand{\fancyK}{K}
\newcommand{\antipode}{\ensuremath{\kappa}} 
\newcommand{\antipodeh}{\ensuremath{\widehat\kappa}} 
\newcommand{\ksymmetric}{$\antipode$-symmetric\xspace}
\newcommand{\kpositive}{$\antipode$-positive\xspace}
\newcommand{\kpositivity}{$\antipode$-positivity\xspace}
\newcommand{\counit}{\varepsilon} 
\newcommand{\counith}{\widehat{\varepsilon}}
\newcommand{\F}{\mbox{\ensuremath{\mathcal F}}} 
\newcommand{\Fh}{\mbox{\ensuremath{\widehat{\mathcal F}}}} 
\newcommand{\inv}{\ensuremath{{}^{-1}}}
\newcommand{\Id}{\mbox{\rm Id}}
\newcommand{\Ad}{\mbox{\rm Ad}}  
\newcommand{\Fix}{\mbox{\rm Fix}}  
\newcommand{\CoZ}{\mbox{\rm CoZ}} 
\newcommand{\CoZr}{\CoZ_{\R}} 
\newcommand{\BiU}{\mbox{\rm Bi\hspace{1pt}\ensuremath{\mathcal{U}}}} 
\newcommand{\cofixe}{\/\textit{cofix\'e }} 
\newcommand{\fixe}{\/\textit{fix\'{e} }}
\newcommand{\Inn}{\mbox{\rm Inn$_H$}}  
\newcommand{\Ug}{{\mathcal U}}
\newcommand{\arrow}{\longrightarrow}
\renewcommand{\dim}{{\mbox{\rm Dim}_\C}}
\newcommand{\ps}{pseudosimple\xspace}
\newcommand{\pscity}{pseudosimplicity\xspace}
\newcommand{\commutes}{\copyright} 
\newcommand{\product}[1][\empty]{\ensuremath{\pmb{\boldsymbol{\cdot\!}}_#1}}
\newcommand{\coproduct}[1][\empty]{\ensuremath{\delta_#1}}
\newcommand{\norm}[1]{\left\|#1\right\|}
\newcommand{\lfnorm}[1]{\norm{#1}_{\scriptscriptstyle{lf}}}
\newcommand{\tracenorm}[1]{\norm{#1}_{\scriptscriptstyle{1}}}
\newcommand{\Ksymbol}{\ensuremath{\fancyK\mbox{-symbol}}\xspace}
\newcommand{\Ksymbols}{{\Ksymbol}\mbox{s}\xspace}
\newcommand{\convolution}{\diamond} \newcommand{\convolute}{\convolution}  \newcommand{\convolve}{\convolution} 
\newcommand{\ip}[2]{\ensuremath{\left\langle #1,#2\right\rangle}}
\newcommand{\boxproduct}{\,\square\,} 
\newcommand\comment[1]{\-\marginpar[\raggedleft\footnotesize\it\textcolor{Sienna4}{#1}]
{\raggedright\footnotesize\it\textcolor{Sienna4}{#1}}}
\renewcommand\comment[1]{} 
\newcommand{\compose}{\circ}
 \newcommand{\compact}{\mathcal K}
\newcommand{\Cu}{\ensuremath{{\mathcal C}\hspace{-.75pt}u}}
\definecolor{refkey}{cmyk}{0.1,0.1,1,0}
\definecolor{labelkey}{cmyk}{0,0.2,1,0.1}
\begin{abstract}
 We obtain two characterizations of the bi-inner Hopf *-automorphisms of a finite-dimensional Hopf C*-algebra, by means of an analysis of the structure of convolution products in this class of Hopf C*-algebra.  
\end{abstract}
\maketitle

\section{Introduction} 
A Hopf algebra is a bi-algebra with an antipode map. 
It seems interesting to classify Hopf C*-algebras by $K$-theoretical methods similar to those used in the purely C*-algebraic setting (the Elliott program). The idea here  is to find a functor from some class of Hopf C*-algebras to a classifying category. In general, such a functor will annihilate some precise class of Hopf algebra automorphisms, and so it becomes interesting to understand the detailed structure of Hopf *-automorphisms in various cases. In this paper, we address the case of  bi-inner Hopf *-automorphisms. These are the Hopf *-automorphisms of a Hopf C*-algebra that are inner as algebra automorphisms, both in the dual algebra and in the given algebra. We obtain two characterizations of the group of bi-inner  Hopf *-automorphisms  for the case of finite-dimensional Hopf C*-algebras (Theorems \ref{th:bi.inner} and \ref{th:biU}). We also develop techniques of independent interest, involving positivity-preserving maps, faithful linear functionals, and other related concepts.

\section{Faithful positive linear functionals and bilateral maps}

Our basic setting is a C*-algebra that is a compact and discrete Hopf algebra, thus being finite-dimensional as a C*-algebra.  
Our notation is based on that of \cite{BS}, denoting co-products by $\coproduct,$ antipode by $\antipode,$ and  co-unit by $\counit.$ The real algebra of \ksymmetric elements is the algebra of elements that are fixed under the antipode composed with the C*-algebraic involution.  The co-centre denotes the set of all co-commutative elements. 

 We recall that a faithful linear functional $f$ is a linear functional such that $f(ab)=0$ for all $b$ implies that $a$ is zero, and such that $f(ab)=0$ for all $a$ implies that $b$ is zero. Faithful linear functionals are well-understood in the positive case, but some of their familar properties do not generalize well beyond the positive case. For example, a faithful positive linear function $f$ composed with a positive injective linear map $\psi$ is again faithful. However, if we consider the composition $f\compose\psi$ of a general faithful linear functional with, say, an injective *-homomorphism $\psi,$ the composition need not be faithful. The problem is that although, if the element $a$ is not zero, there does exist an element $b$ such that $f(\psi(a)b)$ is nonzero, it does not follow that $b$ can be taken to be in the range of $\psi.$ It thus seems natural to consider the self-adjoint case, and we show below that the composition of a self-adjoint faithful linear functional with an injective *-homomorphism is again a self-adjoint faithful linear functional. 

In the finite-dimensional case, faithful linear functionals are exactly those that can be written as $\tau(v\cdot)$ where $\tau$ is the tracial Haar state and $v$ is an invertible element of the algebra. 
The Jordan decomposition of self-adjoint linear functionals gives us a lemma that we prove for the reader's convenience.
\begin{lemma}A faithful self-adjoint linear functional on a finite-dimensional C*-algebra $A$ decomposes as $f_1-f_2$ where the $f_i$ are positive and orthogonal. There is a projection $P$ such that $f_1$ is faithful on $PA$ and $f_2$ is faithful on $(1-P)A$.\end{lemma}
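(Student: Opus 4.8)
The plan is to start from the known structure of faithful self-adjoint linear functionals in the finite-dimensional case, namely $f = \tau(v\,\cdot\,)$ where $\tau$ is the tracial Haar state and $v$ is an invertible element of $A$. Since $f$ is self-adjoint and $\tau$ is a faithful trace, $v$ must be self-adjoint: from $f(a) = \overline{f(a^*)}$ for all $a$ we get $\tau(va) = \overline{\tau(va^*)} = \tau(a v^*) = \tau(v^* a)$, so $\tau((v - v^*)a) = 0$ for all $a$, and faithfulness of $\tau$ forces $v = v^*$. Now take the spectral decomposition of the self-adjoint invertible element $v$: let $P$ be the spectral projection of $v$ for the positive part of its spectrum, so that $v = v_+ - v_-$ with $v_+ = vP = Pv \geq 0$ supported on $PA P$ and $v_- = -v(1-P) = -(1-P)v \geq 0$ supported on $(1-P)A(1-P)$, and both $v_+$ and $v_-$ are invertible in their respective corners (here invertibility of $v$ is exactly what guarantees $0$ is not in the spectrum, so neither corner is degenerate).

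Next I would set $f_1 = \tau(v_+\,\cdot\,)$ and $f_2 = \tau(v_-\,\cdot\,)$. Both are positive linear functionals on $A$ because $v_\pm \geq 0$ and $\tau$ is positive, and clearly $f = f_1 - f_2$. Orthogonality of $f_1$ and $f_2$ in the sense of the Jordan decomposition follows because their "supports" are the complementary central-in-the-corner projections $P$ and $1-P$: concretely $f_1((1-P)a) = \tau(v_+ (1-P) a) = 0$ and $f_2(Pa) = 0$ for all $a$, which is the standard criterion for orthogonality of positive functionals on a C*-algebra. This is also what gives the projection $P$ in the statement. It remains to check faithfulness of $f_1$ on $PA$: if $a \in PA$ (equivalently $a = Pa$) and $f_1(ab) = 0$ for all $b \in A$, then $\tau(v_+ a b) = 0$ for all $b$, and since $\tau$ is faithful this gives $v_+ a = 0$; but $v_+$ restricted to $PAP$ is invertible, and $v_+ a = v_+ (Pa) $ with $Pa \in PAP \cdot$ — more precisely, multiplying by the inverse of $v_+$ inside the corner $PAP$ kills $Pa = a$, so $a = 0$. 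The symmetric argument with the one-sided condition $f_1(ba) = 0$ for all $b$ (using $a v_+ = 0$ and invertibility of $v_+$ in the corner on the right) finishes faithfulness of $f_1$ on $PA$, and identically $f_2$ is faithful on $(1-P)A$.

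The main obstacle, and the only point requiring care, is the interplay between one-sided invertibility in a corner and the two-sided faithfulness condition. One has to be a little careful that $v_+$, which is not invertible in $A$, is genuinely invertible in the hereditary subalgebra $PAP$, and that the corner $PA$ (a left ideal, not a subalgebra) is the right object on which faithfulness is being asserted — so when we conclude $v_+ a = 0$ with $a = Pa$, we should think of $v_+ a$ as lying in $PAP \cdot (PA)$-type products and apply the corner inverse on the left. Apart from tracking these support projections correctly, everything reduces to the faithfulness of $\tau$ and elementary spectral theory for a single self-adjoint invertible element, so no deeper input is needed; in particular the Hopf structure plays no role in this lemma.
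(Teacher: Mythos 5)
Your proof is correct and is essentially the same as the paper's: the paper also writes $f=\tau(v\,\cdot)$ with $v$ self-adjoint and invertible, takes the polar decomposition $v=|v|U$ with $U$ a symmetry, and sets $P=\tfrac{1+U}{2}$, which is exactly your spectral projection for the positive part of $v$, with $f_1=f(P\,\cdot)$ and $f_2=-f((1-P)\,\cdot)$ matching your $\tau(v_+\,\cdot)$ and $\tau(v_-\,\cdot)$. The only nit is terminological: $PA$ is a right ideal (closed under right multiplication), not a left ideal, but this does not affect your argument.
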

\begin{proof} We are given $f=\tau(v\:\cdot)$ where $v$ is invertible and self-adjoint. Polar decomposition gives a symmetry $U$ with $v=|v|U.$ But then let $P:=\frac{1+U}{2},$ and let $f_1=f(P\:\cdot)$, $f_2=-f((1-P)\:\cdot).$
\end{proof}
\begin{corollary}Let $\coproduct{}\colon A\arrow B$ be an injective $*$-homomorphism, and let $f$ be a faithful self-adjoint linear functional on $B$. Then $f\compose\coproduct{}$ is a faithful linear functional on $A.$\label{cor:pullback.of.flf}
\end{corollary}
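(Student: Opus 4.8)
The plan is to reduce the self-adjoint case to the positive case already discussed, using the preceding Lemma to split $f$ into orthogonal positive pieces and then handling each piece separately on the corresponding corner of $B$. First I would write $f = f_1 - f_2$ with $f_1, f_2$ positive and orthogonal, and fix a projection $P \in B$ with $f_1$ faithful on $PB$ and $f_2$ faithful on $(1-P)B$, as furnished by the Lemma. The goal is to show that if $a \in A$ satisfies $f(\coproduct{}(a) b) = 0$ for all $b \in A$, then $a = 0$ (and symmetrically for multiplication on the other side, which follows by the same argument applied to the opposite algebras or by self-adjointness).

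The key observation is that, because $f_1 \perp f_2$, for a \emph{self-adjoint} element $x \in B$ one has $f(x) = f_1(PxP) - f_2((1-P)x(1-P))$, and more to the point, positivity lets one control signs: if $x \geq 0$ then $f_1(x) \geq 0$ and $f_2(x) \geq 0$, so $f(x) = 0$ together with, say, testing against suitable positive elements forces both $f_1$ and $f_2$ to vanish on $x$. Concretely, suppose $f(\coproduct{}(a)b) = 0$ for all $b \in A$. Replacing $b$ by $b^* b$ and using that $\coproduct{}$ is a $*$-homomorphism, consider the element $c := \coproduct{}(a^*) \coproduct{}(b^* b) \coproduct{}(a) = \coproduct{}(a^* b^* b a) \geq 0$ in $B$; applying $f$ to elements of this form and exploiting that $a^* b^* b a$ ranges over enough of $A$, I would extract that $f_1(\coproduct{}(a^* a)) = 0$ and $f_2(\coproduct{}(a^* a)) = 0$, hence $(f_1 + f_2)(\coproduct{}(a^* a)) = 0$. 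Now $f_1 + f_2 = |f| = \tau(|v|\,\cdot)$ is a \emph{faithful positive} linear functional on $B$, and $\coproduct{}$ is an injective $*$-homomorphism, so the composition $(f_1+f_2)\compose\coproduct{}$ is a faithful positive linear functional on $A$ (the well-understood positive case quoted in the text, since $|v|$ is positive invertible). In particular it is faithful as a state, so $(f_1+f_2)(\coproduct{}(a^* a)) = 0$ forces $\coproduct{}(a^* a) = 0$, whence $a^* a = 0$ by injectivity of $\coproduct{}$, i.e. $a = 0$. The symmetric condition ($f(\coproduct{}(a)b)=0$ for all $a$ implies $b=0$) is handled identically, using $f(b\,\coproduct{}(a))$ and the faithfulness of $(f_1+f_2)\compose\coproduct{}$ on the other side, or simply by noting $f^*=f$ and passing to adjoints.

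The main obstacle is the middle step: showing that $f(\coproduct{}(a)b) = 0$ \emph{for all} $b \in A$ — rather than for all $b \in B$ — still suffices to kill $f_1$ and $f_2$ separately on $\coproduct{}(a^* a)$. This is exactly the subtlety flagged in the surrounding discussion: the "witness" element for faithfulness need not lie in the range of $\coproduct{}$. The self-adjoint hypothesis is what saves us: rather than needing a single witness $b$, I can feed in the positive elements $\coproduct{}(b^* b)$ for $b \in A$ and use the \emph{positivity and orthogonality} of the Jordan pieces to conclude that the vanishing of the signed functional $f$ on all such elements already forces the vanishing of the positive functional $f_1 + f_2$ on $\coproduct{}(a^* a)$ — there is no cancellation to worry about once everything in sight is positive. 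I would make this precise by observing that $f(\coproduct{}(x)) \geq 0$ whenever $x \geq 0$ lies in the "positive part" corner $\coproduct{}\inv(PB P \cap \coproduct{}(A))$ and $\leq 0$ on the other corner, and then running the standard faithfulness argument for the positive functional $f_1 + f_2$ on $A$, which is legitimate precisely because $f_1+f_2$ composed with the injective $*$-homomorphism $\coproduct{}$ is positive and faithful, as recalled above.
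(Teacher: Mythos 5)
The overall strategy --- reduce to the faithful \emph{positive} functional $f_1+f_2=\tau(|v|\,\cdot)$ and show that the hypothesis forces $(f_1+f_2)(\coproduct{}(a^*a))=0$ --- founders at exactly the step you yourself flag as the main obstacle, and that obstacle cannot be overcome. Knowing that $f(\coproduct{}(x))=0$ for various positive $x$ does \emph{not} yield $f_1(\coproduct{}(x))=f_2(\coproduct{}(x))=0$: the orthogonality of $f_1$ and $f_2$ is a statement about their supports in $B$ (the corners $PBP$ and $(1-P)B(1-P)$), and it in no way prevents the two non-negative numbers $f_1(x)$ and $f_2(x)$ from being equal and nonzero on a single positive element $x$ of the subalgebra $\coproduct{}(A)$, which need not sit compatibly with those corners. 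A minimal example: $A=\C$, $B=\C\oplus\C$, $\coproduct{}(a)=(a,a)$, $f(x_1,x_2)=x_1-x_2$. Here $f$ is self-adjoint and faithful in the sense of the paper (take $v=(1,-1)$, invertible and self-adjoint), yet $f\compose\coproduct{}=0$; with $a=1$ one gets $f_1(\coproduct{}(a^*a))=f_2(\coproduct{}(a^*a))=1$, so the ``no cancellation to worry about once everything in sight is positive'' claim is exactly what fails. Your fallback observation --- that $f\compose\coproduct{}$ is non-negative on the corner $\coproduct{}\inv(PBP\cap\coproduct{}(A))$ and non-positive on the complementary one --- is vacuous here, since both preimages are zero and a positive element of $A$ need not decompose into such pieces. (There is also a smaller slip earlier: the element $a^*b^*ba$ is not of the form $a\cdot c$, so its image is not directly covered by the hypothesis $f(\coproduct{}(ab))=0$ for all $b$ without invoking self-adjointness of $f$ or the trace property; but this is repairable, unlike the cancellation issue.)

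The example also shows that no proof can succeed for an arbitrary injective $*$-homomorphism: any correct argument must use more than injectivity. For comparison, the paper's proof takes a different route --- it pulls the right-ideal decomposition $B=PB\oplus(1-P)B$ back to right ideals $A_i=\coproduct{}\inv(B_i)$ of $A$ and asserts that $f_i\compose\coproduct{}$ is positive and faithful on $A_i$ --- and that route, too, requires the preimages to span $A$, which is where additional structure of the comultiplication (for instance the counit furnishing a $*$-homomorphic one-sided inverse of $\coproduct{}$) would have to enter. So the gap in your write-up is genuine and not merely a matter of supplying detail: the faithful positive functional $f_1+f_2$ cannot detect elements killed by $f\compose\coproduct{}$ unless one controls how $\coproduct{}(A)$ sits relative to the projection $P$.
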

\begin{proof} The above lemma provides a decomposition into complemented right ideals, $B=B_1\oplus B_2$ and linear functionals $f_i$ such that $f_i$ is faithful and positive on $B_i.$ But then $A_i:=\coproduct{}\inv(B_i)$ is a right ideal of $A$, and $f_i \compose\coproduct{} $ is a positive faithful linear functional on $A_i$. Thus $f\compose\coproduct{}=f_1\compose\coproduct{}-f_2\compose\coproduct{}$ is a self-adjoint faithful linear functional on $A.$
\end{proof}

We recall that on a Hopf algebra there exists a  Fourier transform, defined  by $\beta(a,\F(b))=\tau(ab),$ where $\beta$ is the pairing, and $\tau$ is the Haar state. This lets us define an operator-valued convolution product by $a\convolve b := \F\inv(\F(a)\F(b)).$ 
\begin{lemma}Let $c$ and $y$ be self-adjoint and invertible elements of a finite-dimensional Hopf C*-algebra, $A.$ Then $c\convolve y$ is self-adjoint and invertible.\label{lem:sa.inverse.case}\end{lemma}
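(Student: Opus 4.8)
The plan is to carry the problem through the Fourier transform, which by construction takes the convolution product on $A$ to the ordinary product of the C*-algebra $\Ah$; thus $\F(c\convolve y)=\F(c)\F(y)$, and it suffices to analyse the element $\F(c)\F(y)\in\Ah$ and then pull the two conclusions back along $\F\inv$. Note first that, $\tau$ being faithful, $\F\colon b\mapsto\tau(\,\cdot\,b)$ is a linear isomorphism of $A$ onto $\Ah=A^{*}$, and by definition of $\convolve$ it intertwines $\convolve$ with the product of $\Ah$; in particular it is an algebra isomorphism.

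Invertibility is then the easy half. Since $c$ and $y$ are invertible, so are $\F(c)$ and $\F(y)$ in $\Ah$; hence so is their product $\F(c)\F(y)$, and applying $\F\inv$ returns an invertible element $c\convolve y$. (Phrased in the language of the section: $\F(c)=\tau(\,\cdot\,c)$ and $\F(y)=\tau(\,\cdot\,y)$ are faithful functionals, and one is tracking this through the isomorphism.)

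For self-adjointness the key point is to identify what the C*-involution of $A$ becomes on the $\Ah$ side. Because the Haar state $\tau$ is a trace, a short computation gives $\F(z^{*})(a)=\overline{\F(z)(a^{*})}$ for all $a,z\in A$; that is, $\F$ intertwines $z\mapsto z^{*}$ with the conjugate-linear involution $\phi\mapsto\phi^{\sharp}$ on $\Ah$ defined by $\phi^{\sharp}(a):=\overline{\phi(a^{*})}$. The next step is to observe that, since the coproduct $\coproduct$ is a $*$-homomorphism, this involution is multiplicative: $(\phi\psi)^{\sharp}=\phi^{\sharp}\psi^{\sharp}$ for $\phi,\psi\in\Ah$. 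Combining these facts and using $c^{*}=c$, $y^{*}=y$, we obtain $\F(c\convolve y)^{\sharp}=(\F(c)\F(y))^{\sharp}=\F(c)^{\sharp}\F(y)^{\sharp}=\F(c^{*})\F(y^{*})=\F(c)\F(y)=\F(c\convolve y)$, so that $(c\convolve y)^{*}=c\convolve y$ by injectivity of $\F$.

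The main obstacle I anticipate is precisely the bookkeeping around the involution in the self-adjointness step: one must resist using the C*-involution of $\Ah$ itself, since the $\F$-image of the involution of $A$ differs from it by the antipode, and instead work with the auxiliary involution $\sharp$, whose good behaviour rests on $\tau$ being tracial — which is available because $A$ is a finite-dimensional Hopf C*-algebra — and on $\coproduct$ being a $*$-homomorphism. Everything else should be routine.
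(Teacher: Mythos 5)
There is a genuine gap, and it sits in what you call the ``easy half.'' The Fourier transform is an algebra isomorphism from $(A,\convolve)$ onto $\Ah$ with its own product, so it carries $\convolve$-invertible elements of $A$ to invertible elements of $\Ah$. But the hypothesis is that $c$ and $y$ are invertible for the \emph{ordinary} C*-product of $A$, which is an entirely different condition: it translates, as the paper notes, into the statement that $\F(c)=\tau(\,\cdot\,c)$ is a \emph{faithful} linear functional, not into invertibility of $\F(c)$ in $\Ah$. For example, $1\in A$ is invertible, yet for $A=\C[G]$ one has $\Ah=C(G)$ and $\F(1)=\tau$ is (a multiple of) the indicator function of the identity element --- a minimal projection, hence a zero divisor in $\Ah$. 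So the step ``$c$, $y$ invertible $\Rightarrow$ $\F(c)$, $\F(y)$ invertible in $\Ah$ $\Rightarrow$ their product is invertible'' is unfounded. Your parenthetical fallback --- that $\F(c)$ and $\F(y)$ are faithful functionals and one ``tracks this through the isomorphism'' --- does not repair it: the product $\F(c)\F(y)=\bigl(\tau(c\,\cdot)\tensor\tau(y\,\cdot)\bigr)\compose\coproduct$ is the restriction of a faithful functional on $A\tensor A$ to the proper subalgebra $\coproduct(A)$, and faithfulness does not in general survive such a restriction. That is precisely the difficulty Section 2 of the paper is built to address, and it is resolved only by the Jordan decomposition argument of Corollary \ref{cor:pullback.of.flf}, which is where self-adjointness enters. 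Tellingly, your invertibility argument never uses self-adjointness, yet the conclusion is false without it: in $C(\Z/2)$ the function $c=(1,i)$ is invertible, but $(c\convolve c)(e)$ is proportional to $1+i^2=0$, so $c\convolve c$ is not invertible.

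The self-adjointness half of your argument is correct: the involution $\phi^{\sharp}(a)=\overline{\phi(a^{*})}$ is intertwined with $z\mapsto z^{*}$ by $\F$ because $\tau$ is a trace, and it is multiplicative on $\Ah$ because $\coproduct$ is a $*$-homomorphism; this is essentially the paper's observation that $\bigl(\tau(c\,\cdot)\tensor\tau(y\,\cdot)\bigr)\compose\coproduct$ is a self-adjoint functional. To complete the proof you must replace the invertibility step by an appeal to Corollary \ref{cor:pullback.of.flf}: the tensor product of the two faithful self-adjoint functionals is faithful and self-adjoint on $A\tensor A$, and its pullback along the injective $*$-homomorphism $\coproduct$ is again faithful, whence $c\convolve y=\F\inv$ of a faithful functional is invertible.
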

\begin{proof} An operator $x$ is invertible if and only if the Fourier transform $\F\colon A\arrow\Ah$ maps it to a faithful linear functional, $\tau(x\cdot).$
We have
$\F(c\convolve y)=\left(\tau(c\cdot)\tensor\tau(y\cdot)\right)\compose\coproduct,$
where $\tau(c\cdot)$ and $\tau(y\cdot)$ are faithful self-adjoint linear functionals.
Since $\tau(c\cdot)\tensor\tau(y\cdot)$ is also a faithful self-adjoint linear functional, we apply Corollary \ref{cor:pullback.of.flf}.
\end{proof}
We notice that $c\convolve\Id=\tau(c)\Id,$ so that if $\tau(c)=1,$ we have a Corollary that we do not use, but is perhaps worth recording:
\comment{because it comes close to giving a ring structure on cuntz. Could I improve to get orthgonality-preserving?}
\begin{corollary}If $c$ is self-adjoint, invertible, and $\tau(c)=1$, then the map $y\mapsto y\convolve c$ is spectrum-preserving on Hermitian operators.\end{corollary}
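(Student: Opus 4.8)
The plan is to deduce the spectral statement from Lemma~\ref{lem:sa.inverse.case} after rewriting $\lambda\Id-y\convolve c$ as a convolution by $c$.

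First I would note that, because the Haar state $\tau$ is tracial, its Fourier transform $\F(\Id)$ is a two-sided integral of $\Ah$, so the same computation that gives $c\convolve\Id=\tau(c)\Id$ also gives $\Id\convolve c=\tau(c)\Id$; under the hypothesis $\tau(c)=1$ this becomes $\Id\convolve c=\Id$. By bilinearity of $\convolve$ we then have $y\convolve c-\lambda\Id=(y-\lambda\Id)\convolve c$ for every real $\lambda$. I would also record that $y\convolve c$ is Hermitian whenever $y$ is: the functional $\F(y\convolve c)=\bigl(\tau(y\cdot)\tensor\tau(c\cdot)\bigr)\compose\coproduct$ is self-adjoint, since $\coproduct$ is a $*$-homomorphism and $\tau(y\cdot),\tau(c\cdot)$ are self-adjoint functionals, and $\F\inv$ sends self-adjoint functionals to self-adjoint elements (this is exactly the identification of faithful self-adjoint functionals with $\tau(v\,\cdot)$, $v$ self-adjoint, used in the previous lemmas).

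With these preliminaries, one inclusion of spectra is immediate: if $\lambda\notin\operatorname{spec}(y)$ then $y-\lambda\Id$ is a self-adjoint invertible element, so Lemma~\ref{lem:sa.inverse.case} applied to $y-\lambda\Id$ and $c$ shows that $(y-\lambda\Id)\convolve c=y\convolve c-\lambda\Id$ is invertible; hence $\lambda\notin\operatorname{spec}(y\convolve c)$, and $\operatorname{spec}(y\convolve c)\subseteq\operatorname{spec}(y)$.

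The reverse inclusion is where I expect the real work to lie. The natural strategy is to invert the map $x\mapsto x\convolve c$: one seeks a self-adjoint, invertible element $c^{\dagger}$ that is a two-sided convolution inverse of $c$, i.e.\ with $c\convolve c^{\dagger}=c^{\dagger}\convolve c$ equal to the convolution unit $\F\inv(\counit)$. Granting such a $c^{\dagger}$, associativity of $\convolve$ gives $\bigl((y-\lambda\Id)\convolve c\bigr)\convolve c^{\dagger}=y-\lambda\Id$, so whenever $y\convolve c-\lambda\Id$ is invertible (it is self-adjoint, by the first paragraph) Lemma~\ref{lem:sa.inverse.case} applied to it and $c^{\dagger}$ forces $y-\lambda\Id$ to be invertible; this yields $\operatorname{spec}(y)\subseteq\operatorname{spec}(y\convolve c)$, and hence equality. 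The crux, and the step I anticipate requiring the most care, is the construction of $c^{\dagger}$: one must verify that $\F(c)=\tau(c\,\cdot)$ is invertible in the algebra $\Ah$ and that its inverse there is again a faithful self-adjoint linear functional, so that $c^{\dagger}:=\F\inv\!\bigl(\F(c)\inv\bigr)$ is self-adjoint and invertible in $A$. This is precisely the place where the faithfulness-preservation results (Corollary~\ref{cor:pullback.of.flf}, together with the argument of Lemma~\ref{lem:sa.inverse.case}) must be brought to bear, since the passage between invertibility in $A$ and invertibility of the Fourier-transformed element in the dual algebra is the delicate point of the whole circle of ideas.
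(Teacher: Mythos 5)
Your first paragraph and the inclusion $\operatorname{spec}(y\convolve c)\subseteq\operatorname{spec}(y)$ reproduce, in fuller detail, the only argument the paper itself has: the corollary is stated as an immediate consequence of the identity $c\convolve\Id=\tau(c)\Id$ together with Lemma~\ref{lem:sa.inverse.case}, with no further proof given. You are right to flag that this yields only one inclusion, and right that the reverse inclusion is where the real content would have to be. So your diagnosis of the structure of the problem is better than the paper's.

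But the step you defer --- the construction of a self-adjoint invertible convolution inverse $c^{\dagger}$ with $\F(c^{\dagger})=\F(c)\inv$ --- cannot be carried out, and the reason is a conflation that runs through this whole circle of ideas: invertibility of $c$ in $A$ corresponds (by the first line of the proof of Lemma~\ref{lem:sa.inverse.case}) to \emph{faithfulness} of the functional $\F(c)=\tau(c\,\cdot)$, i.e.\ non-degeneracy of $(a,b)\mapsto\tau(cab)$, and this is not at all the same as invertibility of $\F(c)$ as an element of the convolution algebra $\Ah$ (whose unit is $\counit$, not $\tau$). The element $c=\Id$ already defeats the plan: it is self-adjoint, invertible, and has $\tau(\Id)=1$, yet $\F(\Id)=\tau$ is the Haar integral of $\Ah$, so $\omega\cdot\tau=\omega(1)\tau$ for every $\omega$, and $\tau$ spans a one-dimensional ideal and is never invertible in $\Ah$ once $\dim A>1$. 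Worse, for $c=\Id$ the paper's own identity gives $y\convolve\Id=\tau(y)\Id$, a rank-one map: in $A=\C[\Z/2]$ the symmetry $\delta_g$ has spectrum $\{\pm1\}$ but $\tau(\delta_g)\Id=0$. So no proof along your lines (or any other) can succeed for the statement as written; it would need an added hypothesis such as invertibility of $\F(c)$ in $\Ah$. The same example also undercuts your first paragraph, since it traces back through Lemma~\ref{lem:sa.inverse.case} to Corollary~\ref{cor:pullback.of.flf}, whose proof does not survive the case $\coproduct\inv(B_1)=0$: the faithful self-adjoint functional $\tau\tensor\tau(\delta_g\,\cdot)$ on $A\tensor A$ pulls back to the zero functional along $\coproduct$. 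In short, the gap you anticipated is real, it is not closable, and the honest conclusion is that the corollary (and the lemmas feeding it) require stronger hypotheses rather than a more careful argument.
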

The next lemma is surely known to experts, but we include the short proof.
 \begin{lemma} Let $f\colon A\arrow B$ be a unital linear bijection of unital C*-algebras that is also a bijection of positive elements and a bijection of centres. 
Then $f(az)=f(a)f(z)$ for all $a\in A$ and any central element $z\in Z(A).$ \label{lem:bimodule.over.centre}\end{lemma}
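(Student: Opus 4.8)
The plan is to reduce the statement to a computation with functional calculus on the centre. First I would observe that since $f$ is a unital linear bijection carrying positive elements onto positive elements, it is automatically self-adjoint (it preserves the real subspace of self-adjoint elements, being the span of positives, and fixes adjoints compatibly), hence it suffices to prove $f(az)=f(a)f(z)$ for self-adjoint central $z$. Next, because $f$ restricts to a bijection of centres $Z(A)\to Z(B)$ that is unital and positivity-preserving in both directions, this restriction is a unital order-isomorphism of commutative C*-algebras; by the commutative Gelfand–Naimark picture it is therefore a $*$-algebra isomorphism $Z(A)\cong Z(B)$. In particular $f$ is multiplicative and $*$-preserving \emph{on} $Z(A)$, and it sends the spectral projections of a self-adjoint $z\in Z(A)$ to the corresponding spectral projections of $f(z)$.

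The heart of the argument is then to show that multiplication by a central projection is intertwined by $f$: if $p\in Z(A)$ is a projection and $q:=f(p)\in Z(B)$ is the corresponding (central) projection, I claim $f(pa)=q\,f(a)$ for all $a\in A$. To see this, decompose $A=pAp\oplus(1-p)A(1-p)$ and $B=qBq\oplus(1-q)B(1-q)$ (the off-diagonal corners vanish since $p,q$ are central). For a \emph{positive} $a$, both $pa$ and $(1-p)a$ are positive, so $f(pa)$ and $f((1-p)a)$ are positive elements of $B$ summing to $f(a)$; I need to argue $f(pa)\in qBq$. Here I would use that $f$ maps the hereditary cone of $pAp$ — equivalently, positive elements below scalar multiples of $p$ — into positive elements dominated by scalar multiples of $q=f(p)$, which forces $f(pAp_+)\subseteq qBq$; the reverse inclusion for $f\inv$ gives equality of the corners, and hence $f(pa)=qf(a)$ first for positive $a$ and then, by linearity and the self-adjointness noted above, for all $a$.

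With the projection case in hand, the general central $z$ follows by approximation: writing $z=\int\lambda\,dE(\lambda)$ as a norm-limit of finite linear combinations $\sum_i\lambda_i p_i$ of its spectral projections, linearity of $f$ and the projection-intertwining identity give $f(\sum_i\lambda_i p_i\,a)=\sum_i\lambda_i f(p_i)f(a)=f(\sum_i\lambda_i p_i)f(a)$, and passing to the limit (using continuity of $f$, automatic in finite dimensions, and continuity of multiplication) yields $f(za)=f(z)f(a)$.

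I expect the main obstacle to be the projection step, specifically verifying that $f$ carries the corner $pAp$ into the corner $qBq$: the only tool available is positivity, so one must genuinely exploit that $f$ and $f\inv$ both preserve positivity and that $f(p)=q$, $f(1)=1$ to pin down that positive elements $\le \|a\|p$ go to positive elements $\le\|a\|q$. Everything else — self-adjointness of $f$, the order-isomorphism of centres being a $*$-isomorphism, and the spectral approximation — is routine.
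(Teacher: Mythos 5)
Your argument is essentially sound in the finite-dimensional setting, and the projection step is carried out correctly: for a central projection $p$ with $q=f(p)$, the order-preservation of $f$ and $f^{-1}$ (a positive bijection with positive inverse is an order isomorphism) does force $0\leq x\leq \lambda p$ to go to $0\leq f(x)\leq \lambda q$, hence $f(pAp_+)\subseteq qBq$, and the corner bookkeeping then gives $f(pa)=qf(a)$. But there is a genuine gap against the lemma as actually stated, which is for arbitrary unital C*-algebras: your final spectral-approximation step requires a self-adjoint central $z$ to be a norm-limit of finite linear combinations of \emph{central projections of $A$}. That holds only when $Z(A)$ has totally disconnected spectrum (e.g.\ finite-dimensional, von Neumann, or real rank zero centres). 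For $A=C([0,1],M_2)$ the centre is $C[0,1]$, whose only projections are $0$ and $1$, so your method says nothing about $z(t)=t$. Since you flag continuity ``automatic in finite dimensions,'' you are implicitly proving a weaker statement than the one asserted; in fairness, the paper only ever invokes the lemma for finite-dimensional algebras, where your proof is complete.

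The paper's own route avoids this entirely and is shorter: positivity of $f$ and $f^{-1}$ plus unitality give the Kadison--Schwarz inequality $f(b^2)\geq f(b)^2$ for self-adjoint $b$ in both directions, hence $f(b^2)=f(b)^2$; polarizing with $b=a+z$ ($a$ self-adjoint, $z$ self-adjoint central, so the cross terms combine) yields $2f(az)=f(a)f(z)+f(z)f(a)=2f(a)f(z)$, using that $f(z)$ is central in $B$; the general case follows by splitting $a$ and $z$ into self-adjoint parts. What your approach buys is a very concrete picture (corners over central projections) without quoting Kadison's inequality; what it costs is generality and length. If you want to keep your structure but close the gap, you would need to replace the projection approximation by something valid in all commutative C*-algebras --- at which point the Kadison--Schwarz polarization is the natural tool.
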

\begin{proof} Since the map $f$ takes positive elements to positive elements, it therefore takes self-adjoint elements to self-adjoint elements.  The Kadison--Schwartz inequality then gives $f(b^2)\geq f(b)^2$ for all self-adjoint elements $b$ in $A.$ Similar remarks apply to $f\inv$ and therefore the reverse inequality holds as well. It follows that $f(b^2)= f(b)^2.$ Setting $b=a+z,$ and expanding, where the element $a$ is self-adjoint and $z$ is self-adjoint and central, we deduce that $2f(az)=2f(a)f(z).$ We thus have the desired conclusion, at least for self-adjoint elements $a.$ However, since any element can be written as a linear combination of two self-adjoint elements, the general case follows.\end{proof}
In the next lemma, the key point is to show a certain map is positive.
\begin{lemma}If $c$ and $c\inv$ are mapped to invertible self-adjoint operators by the Fourier transform, and if $c$ commutes with co-central elements, then the action induced by $x\mapsto c x c\inv$ on the dual algebra is a direct sum of *-automorphisms and *-anti-automorphisms of matrix blocks.\label{lem:dual.is.Jordan}
\end{lemma}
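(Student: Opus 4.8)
The plan is to work on the dual algebra $\widehat A$, which is a finite-dimensional C*-algebra and hence a direct sum of matrix blocks. The map in question is $\Ad(c)\colon x\mapsto cxc\inv$ acting on $\widehat A$, and the goal is to show that on each minimal central projection of $\widehat A$ it restricts to either a $*$-automorphism or a $*$-anti-automorphism of that matrix block. First I would observe that since $c$ commutes with the co-central elements, and the co-centre of $A$ is precisely the centre of the dual algebra $\widehat A$ (this is the standard duality between co-commutative elements and the centre under the Fourier transform/pairing), the map $\Ad(c)$ fixes $Z(\widehat A)$ pointwise; in particular it permutes — indeed fixes — the minimal central projections, so it does decompose as a direct sum of maps of individual matrix blocks. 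It therefore suffices to analyze a single block $M_n(\C)$ and show that an ``$\Ad(c)$''-type map arising this way is either an algebra automorphism or an algebra anti-automorphism that is also $*$-preserving.

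The key point, as flagged in the statement, is \emph{positivity}. The hypothesis is that both $c$ and $c\inv$ are sent by $\F$ to invertible self-adjoint linear functionals on $\widehat A$, i.e. to functionals of the form $\tau(\cdot\,)$-perturbations by invertible self-adjoint elements; equivalently $c$ and $c\inv$ correspond, under the appropriate identification, to invertible self-adjoint elements. I would use Lemma~\ref{lem:sa.inverse.case} and Corollary~\ref{cor:pullback.of.flf} as the technical engine: the convolution/multiplication structure on $\widehat A$ is built from $\coproduct$, and the hypothesis guarantees that the relevant ``twisted'' functionals remain faithful and self-adjoint. Concretely, I would show that $\Ad(c)$ sends positive elements of $\widehat A$ to positive elements. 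Once positivity (in both directions, using $c\inv$) is established, $\Ad(c)$ is a unital linear bijection of the C*-algebra $\widehat A$ that is a bijection of positive elements and, by the first paragraph, a bijection of centres; Lemma~\ref{lem:bimodule.over.centre} then makes it a module map over $Z(\widehat A)$, confirming the block decomposition and reducing everything to a single $M_n(\C)$.

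On a single matrix block, a unital positive unital-inverse linear bijection that respects the $*$-structure is a \emph{Jordan} $*$-isomorphism of $M_n(\C)$, by the Kadison--Schwartz argument already used in the proof of Lemma~\ref{lem:bimodule.over.centre} (it preserves squares of self-adjoint elements, hence preserves the Jordan product $a\circ b=\tfrac12(ab+ba)$). The final step is the classical structure theorem for Jordan $*$-isomorphisms of matrix algebras: every Jordan isomorphism of $M_n(\C)$ is either an isomorphism or an anti-isomorphism (there is no room for mixing within an indecomposable block). Assembling these block-by-block conclusions gives the asserted direct-sum decomposition into $*$-automorphisms and $*$-anti-automorphisms of matrix blocks.

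I expect the main obstacle to be the positivity step: showing $\Ad(c)$ preserves positive cones on $\widehat A$. The naive computation $\Ad(c)x = cxc\inv$ is not manifestly positivity-preserving because $c$ need not be positive, only self-adjoint and invertible — so $cxc\inv$ is conjugation by a non-unitary, which in general destroys positivity. The resolution must go through the Hopf-algebraic interpretation: the map on $\widehat A$ induced by $x\mapsto cxc\inv$ is, after Fourier transform, a pullback of a faithful self-adjoint functional along $\coproduct$ (as in the proof of Lemma~\ref{lem:sa.inverse.case}), and it is this \emph{functional-level} faithfulness-and-self-adjointness, combined with the Jordan decomposition Lemma and its Corollary \ref{cor:pullback.of.flf}, rather than literal spatial conjugation, that forces the cone to be preserved. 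Making that translation precise — and in particular using the hypothesis that $c$ commutes with co-central elements to ensure the decomposition into right ideals from the Jordan Lemma is compatible with the block structure of $\widehat A$ — is where the real work lies.
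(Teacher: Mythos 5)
Your overall architecture coincides with the paper's: fix the centre of the dual, establish positivity of the induced map $\alpha_c\colon y\mapsto a\convolve y\convolve b$ in both directions, invoke Lemma \ref{lem:bimodule.over.centre} to reduce to a single matrix block, and finish with the classification of unital positive bijections of $M_n(\C)$ (you route this through Jordan $*$-isomorphisms and Jacobson--Rickart, where the paper cites Schneider and Li--Rodman--\v{S}emrl directly; these are interchangeable). However, you explicitly leave the step you yourself identify as the crux --- that $\alpha_c$ preserves the positive cone of the dual --- unproven, and the mechanism you gesture at for filling it is not the one that works. Faithfulness and self-adjointness of the pulled-back functional (Corollary \ref{cor:pullback.of.flf}) tell you that $\alpha_c$ maps invertible self-adjoint elements to invertible self-adjoint elements; they do not by themselves say anything about cones, and no amount of ``functional-level faithfulness'' will directly force $cxc\inv$-type conjugation by a non-positive element to be a positive map.

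The missing idea is \emph{spectrum preservation via unitality}. Because $x\mapsto cxc\inv$ fixes the co-centre of $A$ pointwise, $\alpha_c$ fixes the centre of the dual pointwise, hence in particular $\alpha_c(1)=1$. Now for $y$ self-adjoint and $\lambda$ a real number outside the spectrum of $y$, the element $\lambda 1-y$ is self-adjoint and invertible, so by Lemma \ref{lem:sa.inverse.case} its image $\lambda 1-\alpha_c(y)$ is self-adjoint and invertible, i.e.\ $\lambda$ is outside the spectrum of $\alpha_c(y)$; applying the same reasoning to the inverse map gives equality of spectra of self-adjoint elements. Since a self-adjoint element with nonnegative spectrum is positive, $\alpha_c$ and its inverse preserve positivity, and the rest of your argument then goes through. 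Without this step (or an equivalent one) the proof is incomplete at exactly the point the lemma is about, so as written the proposal has a genuine gap, even though the surrounding scaffolding is correct and matches the paper.
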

\begin{proof}The action induced by $x\mapsto c x c\inv$ on the dual will be denoted $\alpha_c.$ Let $a$ and $b$ be the operators of the dual associated with $c$ and $c\inv,$ respectively, by the Fourier transform. 
The map $\alpha_c$ takes an element $y$ to $a\convolve y\convolve b$. By Lemma \ref{lem:sa.inverse.case} we have that the map $\alpha_c$ takes self-adjoint and invertible elements to self-adjoint and invertible elements.  Since the map $x\mapsto cxc\inv$ fixes each element of the co-centre of the algebra, the map $y\mapsto a\convolve y\convolve b$ fixes each element of the centre of the dual algebra. It certainly follows from this that the map is unital, and then the property (from Lemma \ref{lem:sa.inverse.case}) that an invertible self-adjoint operator is mapped to an invertible self-adjoint operator implies that  the map $\alpha_c$ preserves the spectrum of self-adjoint operators. But the fact that self-adjoint elements with positive spectrum are positive as operators implies that $y\mapsto a\convolve y\convolve b$ is a positive map.  
Since the same remarks apply to the inverse map, Lemma \ref{lem:bimodule.over.centre} implies that $\alpha_c(zy)=\alpha_c(z)\alpha_c(y).$    We are now in a position to apply the centre-fixing property of $\alpha_c.$ Taking, then, $z$ to be a central projection, we deduce that   $\alpha_c$ maps a matrix block to itself. 
But a unital linear bijection of matrix algebras having the property of taking positive elements to positive elements is either an inner *-automorphism or an inner *-automorphism composed with the ordinary (non-Hermitian) transpose (this result dates back to \cite{schneider}, see also \cite{semrl}). \end{proof}
The conclusion of the above technical lemma can be rephrased in terms of Jordan *-homomorphisms. 
Recall that a map is a Jordan homomorphism if it maps a square of an arbitrary element to a square. A direct sum of *-automorphisms and *-antiautomorphisms of a C*-algebra is equivalent to a Jordan *-homomorphism. (For matrix algebras, this was shown by Jacobson and Rickart\cite{JR2,JR1}. Kadison\cite{kadison.isometries,kadison2} extended these results for surjective maps onto C*-algebras and von Neumann algebras, and showed also that bijective order isomorphisms are Jordan isomorphisms. St\o rmer showed\cite{stormer1965} that Jordan homomorphisms of C*-algebras into C*-algebras are sums of homomorphisms and anti-homomorphisms.)

The next lemma follows from \cite[Th\'eor\`eme 2.9 ]{CES}.
\begin{lemma} Let $A$ and $B$ be finite-dimensional Hopf C*-algebras with tracial Haar states. Let $\alpha\colon A\arrow B$ be a  *-isomorphism, and let $\hat{\alpha}\colon \Bh\arrow\Ah$ be its action on the dual. We suppose the action $\hat{\alpha}$ on the dual is a Jordan *-isomorphism. Then either $\hat\alpha$ is multiplicative, or $\hat\alpha$ is anti-multiplicative. \label{lem:jordan.auto}\end{lemma}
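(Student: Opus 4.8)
The plan is to exploit the fact that a Jordan *-isomorphism between finite-dimensional C*-algebras decomposes, block by block, into a multiplicative part and an anti-multiplicative part, and to show that the Hopf-algebraic compatibility forces one of these two parts to be empty. The starting observation is that $\hat\alpha$, being a Jordan *-isomorphism, restricts on each minimal central projection of $\Bh$ to either a *-isomorphism or a *-anti-isomorphism of the corresponding matrix block of $\Ah$; this is the content of the Jacobson--Rickart / St\o rmer structure theory quoted just before the lemma. Write $\Bh = \Bh_+ \oplus \Bh_-$, where $\hat\alpha$ is multiplicative on $\Bh_+$ and anti-multiplicative on $\Bh_-$, each being a central direct summand. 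The goal is to prove $\Bh_+ = 0$ or $\Bh_- = 0$.

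The key structural input is that $\hat\alpha$ is not an arbitrary Jordan isomorphism but is the \emph{adjoint} (transpose) of the genuine *-isomorphism $\alpha\colon A\arrow B$ with respect to the dual pairings induced by the tracial Haar states. Concretely, $\langle \hat\alpha(\phi), a\rangle_A = \langle \phi, \alpha(a)\rangle_B$ for $\phi\in\Bh$, $a\in A$. Since $\alpha$ is a *-isomorphism, it intertwines the coproducts: $\coproduct[B]\compose\alpha = (\alpha\tensor\alpha)\compose\coproduct[A]$. Dualizing, $\hat\alpha$ intertwines the \emph{products} of $\Bh$ and $\Ah$ up to the flip — but here one must be careful, because the product on $\Ah$ is, by definition, dual to $\coproduct[A]$, so the naive dualization already gives that $\hat\alpha$ is \emph{either} multiplicative \emph{or} anti-multiplicative if it is to be an algebra-type map at all. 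The real content of \cite[Th\'eor\`eme 2.9]{CES} is that these two possibilities cannot mix: one cannot have $\hat\alpha$ multiplicative on one central block and anti-multiplicative on another, because the coproduct $\coproduct[A]$ does not respect such a decomposition. I would invoke that theorem essentially verbatim: apply it to the *-isomorphism $\alpha$ (equivalently, to $\hat\alpha^{-1} = \widehat{\alpha^{-1}}\colon \Ah\arrow\Bh$, which is again a Jordan *-isomorphism by hypothesis), obtaining that the transpose of a *-isomorphism of Hopf C*-algebras, when it happens to be a Jordan *-isomorphism, is forced to be globally multiplicative or globally anti-multiplicative.

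If one wanted a self-contained argument rather than a citation, the route would be: let $e_+, e_- \in \Bh$ be the complementary central projections cutting out $\Bh_\pm$, and consider $\coproduct[\Bh](e_+)$, where $\coproduct[\Bh]$ is the coproduct of the dual, which is dual to multiplication in $B$. Because $B$ is a C*-algebra (in particular has no nontrivial idempotent-valued obstructions of the relevant kind) and $\hat\alpha$ must send grouplike/central data of $\Ah$ consistently, one shows that $e_+$ is grouplike — i.e.\ $\coproduct[\Bh](e_+) = e_+\tensor e_+$ — only if $e_-$ is \emph{not}, and a counting/trace argument against the Haar state then eliminates one of them. The place where I expect the real work to sit is exactly this incompatibility step: verifying that the central decomposition $\Bh=\Bh_+\oplus\Bh_-$ induced by the Jordan structure cannot be a \emph{coalgebra} decomposition unless it is trivial. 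This is precisely why the authors cite \cite{CES} rather than reprove it, and in the plan I would do the same, checking only that the hypotheses of Th\'eor\`eme 2.9 are met: $A,B$ finite-dimensional Hopf C*-algebras with tracial Haar (so the pairing is nondegenerate and the Fourier transform of Lemma~\ref{lem:sa.inverse.case} applies), $\alpha$ a *-isomorphism, and $\hat\alpha$ a Jordan *-isomorphism.
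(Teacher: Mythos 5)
Your proposal takes essentially the same route as the paper: the paper offers no independent argument at all, simply stating that the lemma follows from \cite[Th\'eor\`eme 2.9]{CES}, which is exactly the citation you land on after your preliminary discussion of the Jordan block decomposition. Your additional sketch of a self-contained argument is speculative but clearly flagged as such, and your identification of the hypotheses to be checked before invoking the cited theorem is a reasonable (indeed more careful) version of what the paper does.
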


\begin{proposition} If a \ksymmetric operator $v$ commutes with co-central elements of $A,$ and is invertible, then $x\mapsto vxv\inv$ is a Hopf algebra automorphism or \emph{co-}anti-automorphism.
\label{prop:is.antiauto}\end{proposition}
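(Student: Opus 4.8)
The plan is to show that $x\mapsto vxv\inv$ is a Hopf *-automorphism of $A$ after possibly composing with the canonical co-anti-automorphism, by reducing to the structure of its action on the dual algebra $\Ah$.

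First I would observe that, since $v$ is \ksymmetric, the map $\Ad v\colon x\mapsto vxv\inv$ is a *-automorphism of the C*-algebra $A$: indeed, $\antipode(v^*)=v$ together with the *-structure forces $v$ to be self-adjoint up to the antipode, and in any case the hypothesis combined with Lemma~\ref{lem:sa.inverse.case} guarantees that $v$ and $v\inv$ are sent by the Fourier transform to invertible self-adjoint operators. This is precisely the hypothesis of Lemma~\ref{lem:dual.is.Jordan}, so the induced action $\alpha_v$ on the dual algebra $\Ah$ is a direct sum of *-automorphisms and *-anti-automorphisms of matrix blocks — that is, a Jordan *-isomorphism of $\Ah$. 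At this point I would invoke Lemma~\ref{lem:jordan.auto} with $A=B$ and $\alpha=\Ad v$ (a genuine *-isomorphism of $A$), concluding that the dual action $\widehat{\Ad v}=\alpha_v$ is \emph{either} multiplicative \emph{or} anti-multiplicative on $\Ah$.

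Next I would translate these two cases back to $A$. If $\alpha_v$ is multiplicative on $\Ah$, then $\Ad v$ is compatible with the co-product of $A$ (being a *-homomorphism whose dual is a *-homomorphism), hence $\Ad v$ respects co-product, co-unit, and — since it is a C*-algebra automorphism commuting suitably with the antipode because $v$ is \ksymmetric — the antipode; so $\Ad v$ is a Hopf *-automorphism. If instead $\alpha_v$ is anti-multiplicative on $\Ah$, then $\Ad v$ reverses the co-product, and (again using \ksymmetry of $v$ to handle the antipode) $\Ad v$ is a Hopf *-co-anti-automorphism, i.e. it intertwines $\coproduct$ with the opposite co-product. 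These are exactly the two alternatives in the statement.

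The main obstacle I anticipate is the bookkeeping around the antipode and the *-structure: one must verify carefully that \ksymmetry of $v$ (the condition $\antipode(v)^*=v$, equivalently $v$ fixed by $\antipode$ composed with the involution) is exactly what is needed to make $\Ad v$ commute with $\antipode$ in the automorphism case and with $\antipode$ appropriately twisted in the co-anti-automorphism case, and that the co-unit is automatically preserved because $\Ad v$ is unital and the co-unit is determined by the co-product and the Haar state. A secondary point requiring care is confirming that the hypothesis "commutes with co-central elements" is genuinely needed to invoke Lemma~\ref{lem:dual.is.Jordan} (it supplies the centre-fixing property that forces the block-diagonal Jordan structure rather than a general positive bijection), and that nothing more is needed to get all the way to Lemma~\ref{lem:jordan.auto}. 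Once the Jordan alternative is in hand, the dichotomy multiplicative/anti-multiplicative is immediate and the identification with Hopf (co-anti-)automorphisms is essentially formal.
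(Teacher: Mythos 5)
There is a genuine gap at the very first step. You assert that the hypotheses ``combined with Lemma~\ref{lem:sa.inverse.case} guarantee that $v$ and $v\inv$ are sent by the Fourier transform to invertible self-adjoint operators,'' but neither the hypotheses nor that lemma give this. Lemma~\ref{lem:sa.inverse.case} is a statement about convolution products of elements that are \emph{already} self-adjoint and invertible; it says nothing about whether $\F$ carries invertible elements of $A$ to invertible elements of $\Ah$. Indeed it does not in general: $\F$ is a linear bijection intertwining the product on one side with the convolution product on the other, so invertibility of $v$ in $A$ corresponds to faithfulness of the linear functional $\tau(v\,\cdot)$, which is a different condition from invertibility of $\F(v)$ as an operator in the C*-algebra $\Ah$. (Already for $A=C(G)$, $G$ a finite group, a nowhere-vanishing function can have a non-invertible image in $\C[G]$.) Since invertibility of $\F(v)$ and $\F(v\inv)$ is precisely the hypothesis of Lemma~\ref{lem:dual.is.Jordan}, your argument only covers the case where that hypothesis happens to hold.

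The paper's proof spends its entire second half on the remaining case: it perturbs $v$ to $\tilde v = v+\epsilon j$, where $j$ is the central support projection of the co-unit character, computes $\tilde v\inv = v\inv + f(\epsilon)j$ explicitly, and uses the fact that $\F(j)$ is a multiple of the identity to make $\F(\tilde v)$ and $\F(\tilde v\inv)$ invertible by an arbitrarily small perturbation that preserves all the hypotheses ($\tilde v$ is still invertible, \ksymmetric, and commutes with co-central elements); one then recovers $x\mapsto vxv\inv$ as a norm limit of Hopf (\textit{co-}anti-)automorphisms, using continuity of $\F$ in finite dimensions. You would need to supply this (or some substitute) to close the gap. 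A smaller remark: for the passage from bi-algebra (\textit{co-}anti-)automorphism to Hopf (\textit{co-}anti-)automorphism, the paper simply invokes uniqueness of the antipode, which is cleaner than the direct antipode bookkeeping you propose; the rest of your first paragraph (Lemma~\ref{lem:dual.is.Jordan} followed by Lemma~\ref{lem:jordan.auto}) does match the paper's route for the generic case.
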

\begin{proof}Let us first suppose that the Fourier transforms of $v$ and $v\inv$ are invertible operators. We can apply Lemma \ref{lem:dual.is.Jordan} to conclude that the action induced by $x\mapsto vxv\inv$ on the dual is a Jordan *-homomorphism, and then apply Lemma \ref{lem:jordan.auto} to conclude that  this induced action is in fact either a *-automorphism or a *-\textit{co-}anti-automorphism. The uniqueness of the antipode of a Hopf algebra then implies that the bi-algebra (\textit{co-}anti-)automorphism   $x\mapsto vxv\inv$ is in fact a Hopf algebra (\textit{co-}anti-)automorphism.

We now address the case where  the  Fourier transforms of $v$ and $v\inv$ are not both invertible operators.
From the fact that $j$, the support of the co-unit character $\phi$ is a central projection in $A$, we deduce that
$$ (j\epsilon + v)\inv = v\inv -\epsilon j \frac{1}{\phi(v)(\epsilon+\phi(v))}. $$
This equation tells us that if we replace $v$ by $\tilde v = v + \epsilon j,$ then $\tilde v \inv = f(\epsilon) j + v\inv$ where $f$ is a function that vanishes at $\epsilon=0.$ Since, up to a fixed scalar factor, the Fourier transform maps $j$ to  $\Id,$ this allows us to insure that $\F(\tilde{v})$ and $\F(\tilde{v}\inv)$ are both invertible by means of an arbitrarily small perturbation. We note that the necessary condition $\phi(v)\not=0$ is guaranteed by the invertibility of the element $v$ and the fact that $\phi$ is a character. We remark that since $j$ is central, the element $\tilde v$ commutes with co-central elements, and evidently $\tilde v$ remains invertible. Moreover, $j$ is \ksymmetric,  so all the conditions enumerated in the hypothesis remain valid if we replace $v$ by $\tilde v.$ Doing this, we obtain that $x\mapsto \tilde{v}x\tilde{v}\inv$ is a Hopf algebra (\textit{co-}anti-)automorphism, and by the finite-dimensionality of the ambient algebra(s), we obtain $x\mapsto vxv\inv$ as a norm limit of Hopf algebra (\textit{co-}anti-)automorphisms. In particular, this rests upon the fact that in the finite dimensional case, the Fourier transform is continuous. 
\end{proof}

\section{Bi-inner automorphisms: main results}
We now proceed to give two characterizations of the bi-inner Hopf *-automorphisms of a finite-dimensional Hopf C*-algebra. In the first characterization, we view the given Hopf algebra and its dual as concrete sub-algebras, $S,$ and $\Sh,$ of a (finite-dimensional) copy of $\BH.$ There is in this picture a multiplicative unitary $V\in\BHH,$ which can be taken to belong  to $\Sh\tensor S\subset\BHH.$ If this is done, then this unitary is essentially unique, see \cite[Th\'eor\`em 6.2]{BBS}, at least in the finite-dimensional case. We call this the multiplicative unitary associated with the Hopf C*-subalgebras $S$ and $\Sh.$ We then have a Theorem: 
\begin{theorem} Let $S$ and $\Sh$ be concrete Hopf C*-subalgebras associated with a multiplicative unitary $V.$ Let us suppose that the unitary $u$ gives a Hopf algebra automorphism of $S$ whose action on the dual $\Sh$ is inner. There then exists a unitary $\uh \in \Sh$ such that $\uh \tensor u$ commutes with the multiplicative unitary $V.$
\label{th:bi.inner}
 \end{theorem}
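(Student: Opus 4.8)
The plan is to reduce the assertion to the single identity $(\widehat\theta\tensor\theta)(V)=V$, where $\theta$ denotes the Hopf *-automorphism $x\mapsto uxu^{*}$ of $S$ and $\widehat\theta$ denotes its action on the dual $\Sh$. As a preliminary I would record that the tracial Haar state $\tau$ of $S$ is invariant under $\theta$: because $\theta$ is a Hopf *-automorphism, $\tau\compose\theta$ is again a left-invariant state, so $\tau\compose\theta=\tau$ by uniqueness of the Haar state.

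Next I would make the action on the dual explicit. The algebra $\Sh$ consists of the functionals $a\mapsto\beta(a,\F(b))=\tau(ab)$ on $S$, and a short computation using $\tau\compose\theta=\tau$ and the traciality of $\tau$ shows that the action on the dual is the Fourier-transform conjugate $\widehat\theta=\F\compose\theta\compose\F\inv$, equivalently $\widehat\theta\compose\F=\F\compose\theta$. By hypothesis $\widehat\theta$ is inner, and since an inner *-automorphism of a finite-dimensional C*-algebra is implemented by a unitary, there is a unitary $\uh\in\Sh$ with $\widehat\theta=\Ad_{\uh}$. (The element $\uh$ is only determined up to a unitary in the centre of $\Sh$, but this is irrelevant since a central unitary of $\Sh$ commutes with $V\in\Sh\tensor S$.)

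Then I would express $V$ through the Fourier transform. Write $V=(\F\tensor\Id)(E)$, where $E\in S\tensor S$ is the canonical element of the Haar pairing, namely the unique element with $(\tau\tensor\Id)\bigl((x\tensor1)E\bigr)=x$ for all $x\in S$ --- concretely $E=\sum_i b_i\tensor a^i$ for a basis $\{a^i\}$ of $S$ and the $\tau$-dual family $\{b_i\}$, $\tau(a^ib_j)=\delta^i_j$. Since $\theta$ preserves $\tau$, the defining property of $E$ forces $(\theta\tensor\theta)(E)=E$. Combining this with $\widehat\theta\compose\F=\F\compose\theta$ gives
$$(\widehat\theta\tensor\theta)(V)=\bigl((\F\compose\theta)\tensor\theta\bigr)(E)=(\F\tensor\Id)\bigl((\theta\tensor\theta)(E)\bigr)=(\F\tensor\Id)(E)=V.$$
As $\widehat\theta=\Ad_{\uh}$ and $\theta=\Ad_u$, this reads $(\uh\tensor u)V(\uh\tensor u)^{*}=V$; that is, $\uh\tensor u$ commutes with $V$, as claimed.

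The step I expect to be the main obstacle is the bookkeeping in the middle two paragraphs: one has to be sure that the ``action on the dual'' appearing in the hypothesis is exactly the Fourier-transform conjugate $\F\compose\theta\compose\F\inv$, with no antipode twist, and that the essentially unique multiplicative unitary of \cite{BBS} normalised to lie in $\Sh\tensor S$ really is the canonical Haar-pairing element $(\F\tensor\Id)(E)$; once these two identifications are in hand, the commutation relation is automatic. One could also try to avoid the explicit description of $V$ by noting that $(\uh\tensor u)V(\uh\tensor u)^{*}$ again lies in $\Sh\tensor S$ and generates the same pair of leg algebras as $V$, and then appealing to the essential uniqueness of \cite[Th\'eor\`eme~6.2]{BBS}; but checking directly that it implements the same coproducts seems more delicate than the Fourier-transform computation above.
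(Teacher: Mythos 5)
Your proof is correct, but it takes a genuinely different route from the paper's. The paper argues by rigidity: it conjugates, replacing $V$ by $(\uh^*\tensor u^*)V(\uh\tensor u)$, checks that the result is again a multiplicative unitary with the same leg algebras $S$ and $\Sh$, the same \emph{fix\'e} and \emph{cofix\'e} vectors, and the same multiplicity, and then invokes the essential-uniqueness theorem \cite[Th\'eor\`eme 6.2]{BBS} to conclude that the conjugated unitary must coincide with $V$ itself. You argue by computation: you identify $V$ with the canonical element $(\F\tensor\Id)(E)$ of the Haar pairing, use invariance of the Haar state under $\theta=\Ad_u$ (hence $(\theta\tensor\theta)(E)=E$) together with $\widehat\theta\compose\F=\F\compose\theta$, and read off $(\widehat\theta\tensor\theta)(V)=V$ directly. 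The trade-off is roughly even: the paper's route avoids any explicit formula for $V$ but must verify the hypotheses of the uniqueness theorem, while yours makes the commutation relation transparent but rests on the identification of the normalized multiplicative unitary in $\Sh\tensor S$ with the canonical duality element --- which, in finite dimensions, is exactly what the same uniqueness statement of \cite{BBS} (equivalently, the duality theory for finite-dimensional Kac algebras) supplies, so the appeal to \cite{BBS} is relocated rather than removed. Your closing paragraph in fact correctly anticipates the paper's actual strategy as the ``alternative'' route; the worry you raise there about re-verifying the coproducts is unfounded, since conjugating $V$ by $\uh\tensor u$ automatically conjugates the leg algebras by the corresponding inner automorphisms, which, together with the preservation of the fixed and cofixed vectors, is all the uniqueness theorem needs.
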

\begin{proof} Let $\hat u$ be the unitary coming from the action on the dual.
Since we have $$\beta(u^*xu,\uh^*y\uh)  = \beta(x,  y  ),$$
if we 
replace the multiplicative unitary $V$  by $(\uh^* \tensor u^*)V(\uh \tensor u)$, we notice that $S=\{ (\omega\tensor\Id)V : \omega\in B(\H)^* \}$ and $\Sh=\{ (\Id\tensor\rho)V : \rho\in B(\H)^* \}$ change only up to an inner automorphism, and thus remain invariant as subalgebras of $B(\H).$ It follows from the definition\cite{BBS} of  \fixe and \cofixe vectors that the elements of $S$ map a \cofixe vector to a multiple of itself, and that the elements of $\Sh$ map a \fixe vector to a multiple of itself. Thus, the transformation we have made will not change the \fixe and \cofixe vectors, and moreover, the multiplicity of the multiplicative unitary (which we can take to be 1) will not be changed. 
 By Th\'eor\`em 6.2 in \cite{BBS}, there exists only one multiplicative unitary in $B(\H\tensor\H)$ with these properties. Thus, we conclude that $V=(\uh^*\tensor u^*)V(\uh\tensor u)$, or in other words, that  $\uh \tensor u$ commutes with the multiplicative unitary $V$.  \end{proof}

\begin{remark}\label{rem:bi-inner} {A converse of Theorem \ref{th:bi.inner} also holds: if $\uh \tensor u\in \Sh\tensor S$ commutes with the given multiplicative unitary, then $u$ and $\uh$ determine a dual pair of inner automorphisms. This is because, taking the linear functionals point of view, the pairing $\beta(f,g)$ is given by $\beta(f,g)=(f\tensor g)V,$ for $f\in \Sh^*,$ and $g\in S^*.$ But then $\uh$ and $u$ satisfy $\beta(f(\uh\cdot\uh^*),g(u\cdot u^*))=\beta(f,g).$ } \end{remark}

 \begin{corollary} The bi-inner Hopf *-automorphisms of a finite-dimensional Hopf C*-algebra $A$ are a connected set.\label{cor:is.connected}\end{corollary}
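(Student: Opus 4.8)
The plan is to establish path-connectedness directly: for an arbitrary bi-inner Hopf *-automorphism $\theta$ I will build a path of bi-inner Hopf *-automorphisms joining the identity to $\theta$. If $A$ is cocommutative the co-centre is all of $A$, and, as bi-innerness forces $\theta$ to fix the co-centre, $\theta=\mathrm{id}$; so we may assume $A$ is not cocommutative.

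First I would pin down the structure of $\theta$. Write $\theta=\Ad(u)$ with $u$ unitary. Innerness in $A$ makes $\theta$ fix $Z(A)$ pointwise; innerness of the action $\Ad(\uh)$ induced on $\Ah$ makes the latter fix $Z(\Ah)$ pointwise, which dualises to the statement that $\theta$ fixes every co-central element of $A$, so $u$ commutes with the co-centre $Z_{\mathrm{co}}$. By uniqueness of the antipode, $\theta$ commutes with $\antipode$; since the Haar state is a trace, $\antipode$ is a $*$-anti-automorphism, and then $\antipode(u)u$ is a central unitary fixed by $\antipode$. A square-root extraction on the commutative $*$-algebra $Z(A)$, carried out one $\antipode$-orbit of minimal central projections at a time, produces a central unitary $w$ with $uw$ $\antipode$-symmetric (and still commuting with $Z_{\mathrm{co}}$); since $uw$ implements the same $\theta$, I take $u$ itself to be $\antipode$-symmetric. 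As $Z_{\mathrm{co}}$ is $\antipode$-invariant, $B:=\{x\in A:\antipode(x^{*})=x\}\cap Z_{\mathrm{co}}'$ is a finite-dimensional real C*-subalgebra of $A$, and $u$ is a unitary of $B$.

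Next I would join $u$ to $1$ by a path $u_t$, $t\in[0,1]$, of unitaries of $B$, and argue as follows. Each $u_t$ is $\antipode$-symmetric, invertible, and commutes with $Z_{\mathrm{co}}$, so Proposition \ref{prop:is.antiauto} makes $\Ad(u_t)$ a Hopf automorphism or a Hopf co-anti-automorphism. These two conditions are closed in $t$ and, since $A$ is not cocommutative, mutually exclusive; as $\Ad(u_0)=\mathrm{id}$ is a Hopf automorphism, connectedness of $[0,1]$ forces every $\Ad(u_t)$ to be one, hence (being conjugation by a unitary) a Hopf *-automorphism. Also $\Ad(u_t)$ is inner in $A$, and the automorphisms it induces on $\Ah$ form a continuous path in $\mathrm{Aut}(\Ah)$ based at the identity; since for a finite-dimensional C*-algebra the inner automorphisms constitute exactly the identity component of the automorphism group (the component group being the finite group of permutations of isomorphic matrix blocks), this induced action is inner for every $t$. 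Hence $t\mapsto\Ad(u_t)$ is the desired path of bi-inner Hopf *-automorphisms.

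The hard part is the step glossed over: joining $u$ to $1$ through the unitary group of $B$, i.e.\ showing $u$ lies in its identity component. For $B\isom\bigoplus M_{n_i}(\R)\oplus\bigoplus M_{m_j}(\C)\oplus\bigoplus M_{l_k}(\quaternions)$ the unitary group is $\prod O(n_i)\times\prod U(m_j)\times\prod Sp(l_k)$, disconnected precisely through the orthogonal factors, so one must exclude the ``wrong determinant sign'' there. I would try to do so using the residual freedom to multiply $u$ by an $\antipode$-symmetric central unitary --- each such acting as $\pm1$ on every real matrix block of $B$ --- together with the extra constraints that bi-innerness and the behaviour of $V$ under $\antipode$ place on $u$. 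I expect this component/determinant bookkeeping to be the technical core; everything else is soft. (Alternatively, via Theorem \ref{th:bi.inner} and Remark \ref{rem:bi-inner} one could reduce the corollary to connectedness of the group $\{(\uh,u):\uh\tensor u\text{ commutes with }V\}$, but this reformulation runs into the same kind of analysis.)
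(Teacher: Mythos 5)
Your proposal does not close: the step you yourself flag as ``the hard part'' is in fact the entire content of the corollary, and it is left undone. Having reduced to a \ksymmetric unitary $u$ in the real C*-algebra $B=\{x:\antipode(x^{*})=x\}\cap Z_{\mathrm{co}}'$, you need $u$ to lie in the identity component of the unitary group of $B$, and, as you observe, that group is disconnected whenever $B$ has real matrix blocks (it is $\prod O(n_i)\times\prod U(m_j)\times\prod Sp(l_k)$). No argument is given that excludes the non-identity components --- only the hope that ``residual freedom'' and ``determinant bookkeeping'' will do it. Until that is supplied there is no path, hence no proof. (The surrounding soft steps --- mutual exclusivity of automorphism versus co-anti-automorphism along a path in the non-cocommutative case, innerness of the dual action via the component group of $\mathrm{Aut}$ of a finite-dimensional C*-algebra --- are fine, but note that you are in effect re-deriving much of Theorem \ref{th:biU}, which in the paper comes \emph{after}, and depends on, this corollary.)

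The paper avoids the real-unitary-group obstruction entirely by working on the other side of Theorem \ref{th:bi.inner}: a bi-inner Hopf *-automorphism is encoded by a simple tensor $\uh\tensor u\in\Sh\tensor S$ commuting with the multiplicative unitary $V$; one deforms it through the simple tensors $\uh^{\,r}\tensor u^{\,r}$, $r\in(0,1]$, which are fractional powers of unitaries in a \emph{complex} C*-algebra, where connectedness of the unitary group is automatic; and Remark \ref{rem:bi-inner} converts each such commuting tensor back into a bi-inner Hopf *-automorphism. That short deformation is the whole proof. If you want to salvage your route, the most economical fix is to invoke Theorem \ref{th:bi.inner} and Remark \ref{rem:bi-inner} in this way rather than trying to control which component of $U(B)$ contains $u$.
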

\begin{proof} \comment{Given a unitary $\uh\tensor u$ as in the above theorem, we note that the map $\uh\tensor u \mapsto \uh^r \tensor u^r,$ where $0<r\leq1$ is a real scalar, is compatible with the equivalence relation on tensors given by $\uh\tensor \lambda u=  \lambda\uh\tensor  u.$ The compatibility was not so clear, but the interesting stuff in Blecher and Neal, Open Projs I, makes me think it would not be a problem. However, all we need to do is to start with uh tensor u in the tensor product, then choose arbitarily representatives u and uh, which just means choosing a complex scalar of modulus 1, and then we tensor u^r and uh^r .} Given a unitary $\uh\tensor u$ as in Theorem \ref{th:bi.inner}, notice that $ \uh^r \tensor u^r,$ where $r$ is in $(0,1]$, is a simple tensor commuting with  the given multiplicative unitary, $V.$ By Remark \ref{rem:bi-inner} we thus obtain a bi-inner Hopf *-automorphism, and hence a path of bi-inner automorphisms connecting the given automorphism to the identity.
\end{proof}

\begin{lemma}\label{lem:inner} A *-automorphism of a  finite-dimensional real C*-algebra that is connected to the identity by a path of *-automorphisms is inner. \end{lemma}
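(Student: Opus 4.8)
The plan is to reduce the statement to the classical fact that a $*$-automorphism of a finite-dimensional C*-algebra lying in the connected component of the identity of its automorphism group is inner, while being careful that we are working over $\R$, so that the relevant simple blocks are matrix algebras over $\R$, $\C$, or the quaternions $\quaternions$. First I would decompose the finite-dimensional real C*-algebra $A$ as a direct sum of such simple blocks. A $*$-automorphism connected to the identity by a path of $*$-automorphisms must, by continuity of the path and discreteness of the permutation action on the (finitely many) simple summands, fix each summand setwise; so it suffices to treat each simple block separately, i.e. to show that a $*$-automorphism of $M_n(\R)$, $M_n(\C)$, or $M_n(\quaternions)$ that is connected to the identity through $*$-automorphisms is inner.

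Next I would invoke the structure of the $*$-automorphism groups of these simple real C*-algebras. For $M_n(\C)$ (as a complex C*-algebra) every $*$-automorphism is inner by the Skolem--Noether theorem, so the conclusion is automatic; the only subtlety is the real case, where for $M_n(\C)$ viewed as a real algebra one also has complex conjugation, but that is an orientation-reversing component and is not connected to the identity. For $M_n(\R)$ and $M_n(\quaternions)$, again by Skolem--Noether over the respective division rings every $*$-automorphism is inner, implemented by conjugation by an element of the appropriate orthogonal/symplectic-type group; here there may be more than one connected component (e.g. $O(n)$ versus $SO(n)$ phenomena), but conjugation identifies the automorphism group with a quotient of a classical Lie group by its centre, and in all cases the inner automorphisms already exhaust the whole automorphism group. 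So the real content is simply: for each simple real block, $\operatorname{Aut}(B_i)=\operatorname{Inn}(B_i)$, whence any automorphism — connected to the identity or not — is inner, and a fortiori one connected to the identity is.

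Putting the pieces together: given $\varphi$ a $*$-automorphism of $A=\bigoplus_i B_i$ connected to $\Id$ through a path $\varphi_t$ of $*$-automorphisms, the path fixes each $B_i$ setwise, the restriction $\varphi|_{B_i}$ is inner by the previous paragraph, say $\varphi|_{B_i}=\Ad(u_i)$ for a unitary $u_i$ in (the unitary group of) $B_i$, and then $u:=\bigoplus_i u_i$ is a unitary of $A$ with $\varphi=\Ad(u)$. I expect the main point requiring care — the ``hard part'', such as it is — to be the bookkeeping over the three types of simple real C*-algebras and the verification that in each case conjugation is surjective onto $*$-automorphisms (in particular ruling out that e.g. transpose-type or conjugation-type maps contribute a separate component that could be hit); once that classical input is in hand, the rest is the routine direct-sum argument above.
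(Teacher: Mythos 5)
Your proof is correct, but it takes a genuinely different and more elementary route than the paper. The paper's proof is a two-line appeal to heavy machinery: homotopy invariance of (real) $K$-theory shows that an automorphism connected to the identity acts trivially on the classifying invariant, and then the functoriality of Giordano's classification of real AF algebras forces it to be (approximately, hence in finite dimensions exactly) inner. You instead argue by hand from the Artin--Wedderburn decomposition of a finite-dimensional real C*-algebra into blocks of type $M_n(\R)$, $M_n(\C)$, $M_n(\mathbb{H})$: the induced permutation of blocks is a locally constant function of the path parameter and hence trivial, and on each block Skolem--Noether (over the relevant centre) does the rest. One phrasing slip worth fixing: your summary claim that $\operatorname{Aut}(B_i)=\operatorname{Inn}(B_i)$ for \emph{every} simple real block is literally false for the complex-type blocks, where entrywise complex conjugation is a real *-automorphism that is not inner; but you have already disposed of this case correctly by noting that any automorphism of $M_n(\C)$ over $\R$ acts on the centre $\C$ either trivially or by conjugation, that this is a discrete invariant constant along the path, and that triviality on the centre gives $\C$-linearity and hence innerness. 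The trade-off is clear: your argument is self-contained and transparent but specific to the finite-dimensional structure theory, while the paper's argument is shorter given the cited classification theorem and would adapt to the real AF setting.
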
 
\begin{proof} Since K-theory is   homotopy invariant, the K-theory groups are left unaltered by a *-automorphism that is connected to the identity. It follows from the implicit functoriality of Giordano's classification result that the given *-automorphism is inner equivalent to the identity *-automorphism (see \cite[Corollary 5.3]{giordano}.)   \end{proof}

\begin{theorem}The bi-inner Hopf algebra *-automorphisms of a finite-dimensional Hopf C*-algebra $A$ are precisely the component of the identity of the set of maps of the form
$x\mapsto vxv^*$ where $u$ is a {\ksymmetric}al unitary that commutes with the co-commutative sub-algebra of $A.$\label{th:biU}\end{theorem}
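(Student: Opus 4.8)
The plan is to show that the bi-inner Hopf *-automorphisms of $A$ are precisely the identity component of $\Ad(G)$, where $G$ is the group of \ksymmetric unitaries of $A$ commuting with the co-centre (= co-commutative subalgebra). That $G$ is a group rests on $\antipode(uv)=\antipode(v)\antipode(u)=v^*u^*=(uv)^*$; being a closed subgroup of the compact unitary group of $A$, it is a compact Lie group, so $\Ad(G)$ is a compact Lie group and its identity component is path-connected. I will also use: (i) in the finite-dimensional setting $\antipode$ is an involutive, *-preserving anti-automorphism and every Hopf-algebra automorphism commutes with it; (ii) the Fourier transform $\F\colon A\arrow\Ah$ intertwines a Hopf *-automorphism $\alpha$ with its dual action $\hat\alpha$ on $\Ah$ and maps the co-centre of $A$ onto the centre $Z(\Ah)$ --- the correspondence already used in the proof of Lemma \ref{lem:dual.is.Jordan}; (iii) by Proposition \ref{prop:is.antiauto}, every element of $\Ad(G)$ is a Hopf *-automorphism or a Hopf *-\emph{co-}anti-automorphism of $A$.

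To see that a bi-inner Hopf *-automorphism lies in the identity component of $\Ad(G)$, write such an $\alpha$ as $\Ad(u)$ on $A$ and as $\Ad(\uh)$ on $\Ah$, with $u,\uh$ unitary. An inner automorphism fixes its centre pointwise, so $\hat\alpha$ is the identity on $Z(\Ah)$; transporting this along $\F$ by (ii) shows $\alpha$ is the identity on the co-centre, so $u$ commutes with the co-centre. Next I would unwind the antipode-compatibility $\antipode\compose\Ad(u)=\Ad(u)\compose\antipode$ using (i): it forces $w:=\antipode(u)\,u$ to be a central unitary with $\antipode(w)=w$. I would then set $v:=uz$ for a central unitary $z$ solving $\antipode(z)=z^*w^*$; such $z$ is built over the decomposition of $Z(A)\cong\C^k$ into its minimal summands, which $\antipode$ permutes by an involution --- on a fixed summand take a square root of $w^*$, on a transposed pair choose one coordinate freely and let the other be forced, the only consistency needed being exactly $\antipode(w)=w$. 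Then $v$ is a unitary with $\antipode(v)=v^*$, it commutes with the co-centre since $u$ does and $z$ is central, and $\Ad(v)=\Ad(u)=\alpha$; thus $\alpha\in\Ad(G)$. By Corollary \ref{cor:is.connected} the bi-inner Hopf *-automorphisms form a connected set containing the identity, and, as just shown, this set is contained in $\Ad(G)$; hence it lies in the identity component of $\Ad(G)$.

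For the reverse inclusion, note that the Hopf *-automorphisms in $\Ad(G)$ and the Hopf *-\emph{co-}anti-automorphisms in $\Ad(G)$ are each closed and --- unless $A$ is co-commutative, in which case the two notions coincide and there is nothing to prove --- disjoint, since a map that is both would force the coproduct to equal its opposite. So the Hopf *-automorphisms form a clopen subset of $\Ad(G)$, and the identity component of $\Ad(G)$ consists of Hopf *-automorphisms. Let $\beta$ lie in that component. Then $\beta=\Ad(v)$ is inner on $A$, and, the identity component being path-connected, there is a path $t\mapsto\beta_t$ in it from $\beta_0=\Id$ to $\beta_1=\beta$; each $\beta_t$ is a Hopf *-automorphism, so each dual action $\hat\beta_t$ is a *-automorphism of $\Ah$, and $t\mapsto\hat\beta_t$ is continuous because duality is continuous in finite dimensions. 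Hence $\hat\beta$ is a *-automorphism of $\Ah$ connected to the identity by a path of *-automorphisms, so by Lemma \ref{lem:inner} it is inner; therefore $\beta$ is bi-inner.

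I expect the main obstacle to be the passage, in the first inclusion, from an arbitrary implementing unitary $u$ to a \ksymmetric one: commutation with the co-centre survives for free because the correction $z$ is central, but isolating the cocycle-type equation $\antipode(z)=z^*w^*$ from the antipode-compatibility of $\alpha$, and checking it is solvable over $Z(A)$, needs the block analysis of $\antipode$. The rest consists of direct appeals to Proposition \ref{prop:is.antiauto}, Corollary \ref{cor:is.connected}, and Lemma \ref{lem:inner}, plus routine finite-dimensional compactness and continuity.
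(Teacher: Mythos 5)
Your proposal is correct, but it reaches the crucial point --- producing a \ksymmetric implementing unitary --- by a genuinely different route than the paper. The paper restricts the bi-inner automorphism to the real C*-algebra of \ksymmetric elements (which determines the automorphism, since every element is $a+ib$ with $a,b$ \ksymmetric), observes via Corollary \ref{cor:is.connected} that this restriction is connected to the identity, and then invokes Lemma \ref{lem:inner} (Giordano's classification of real C*-algebras) to conclude it is inner \emph{in the real algebra}; the implementing unitary is then \ksymmetric by fiat. You instead start from an arbitrary implementing unitary $u$ on $A$, extract from the antipode-compatibility of $\Ad(u)$ the fact that $w=\antipode(u)u$ is a central unitary fixed by $\antipode$, and correct $u$ by a central unitary $z$ solving $\antipode(z)=z^*w^*$, which you verify is solvable over $Z(A)\cong\C^k$ block by block (the fixed blocks needing a square root, the swapped pairs needing exactly $\antipode(w)=w$). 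This is more elementary and explicit --- it avoids Giordano's classification entirely --- at the cost of the cocycle computation; it does rely on $\antipode^2=\Id$ and $\antipode$ being a *-anti-automorphism, but the paper's notion of \ksymmetric elements already presupposes this. Your treatment of the reverse inclusion is also more complete than the paper's: the paper checks that the identity component of $\Ad(G)$ consists of Hopf *-automorphisms (via Proposition \ref{prop:is.antiauto} and the clopen separation of automorphisms from \emph{co-}anti-automorphisms, as you also do), but does not explicitly verify that these are inner on the dual; your argument --- transporting the path to the dual actions $\hat\beta_t$ and applying Lemma \ref{lem:inner} to $\Ah$ --- supplies that missing half of ``bi-inner'' cleanly. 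Both of your additions (the explicit reason $u$ commutes with the co-centre, namely that the inner dual action fixes $Z(\Ah)$ pointwise, and the dual-innerness of the identity component) are points the paper leaves implicit.
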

\begin{proof}  We note that if we restrict a bi-inner Hopf algebra *-automorphism to the algebra of {\ksymmetric}al elements, we obtain a $*$-automorphism of the algebra of {\ksymmetric}al elements. The given automorphism is determined by its restriction, because any element of the algebra can be decomposed as $a+ib,$ with $a$ and $b$ belonging to the real algebra of \ksymmetric elements.  By Corollary \ref{cor:is.connected}, the restricted automorphism is connected to the identity and is thus inner by Lemma \ref{lem:inner}.  Therefore, the bi-inner *-automorphisms are at least a connected subset of the set of maps of the given form, $x\mapsto vxv^*,$ where $u$ is a {\ksymmetric}al unitary that necessarily commutes with the co-commutative sub-algebra of $A.$ But on the other hand, Proposition \ref{prop:is.antiauto} shows that maps of the given form are either Hopf automorphisms or Hopf \textit{co-}anti-automorphisms. We note that if the Hopf algebra *-(\textit{co-}\/anti-)automorphism  $x\mapsto vxv^*$  is connected to the identity by a path of maps having the same properties, then in fact it must be a Hopf algebra *-automorphism. These facts  establish the theorem.\end{proof}
It is easy to check that the set given by the above theorem is a Lie group. It would be straighforward and perhaps interesting to obtain further information on the Lie group, using the description given in the above Theorem.
 \begin{remark} The bi-inner Hopf *-automorphisms of a finite-dimensional Hopf C*-algebra are a connected Lie group.\end{remark}

\centerline{{\huge {\rotatebox[]{270}{\textdagger}}\!{\rotatebox[]{90}{\textdagger}}}}\normalsize

\end{document}